	\def\AA{{\ifmmode{\mathbbm{A}}\else{$\mathbbm{A}$}\fi}}
	\def\BB{{\ifmmode{\mathbbm{B}}\else{$\mathbbm{B}$}\fi}}
	\def\CC{{\ifmmode{\mathbbm{C}}\else{$\mathbbm{C}$}\fi}}
	\def\EE{{\ifmmode{\mathbbm{E}}\else{$\mathbbm{E}$}\fi}}
	\def\FF{{\ifmmode{\mathbbm{F}}\else{$\mathbbm{F}$}\fi}}
	\def\HH{{\ifmmode{\mathbbm{H}}\else{$\mathbbm{H}$}\fi}}
	\def\KK{{\ifmmode{\mathbbm{K}}\else{$\mathbbm{K}$}\fi}}
	\def\NN{{\ifmmode{\mathbbm{N}}\else{$\mathbbm{N}$}\fi}}
	\def\PP{{\ifmmode{\mathbbm{P}}\else{$\mathbbm{P}$}\fi}}
	\def\QQ{{\ifmmode{\mathbbm{Q}}\else{$\mathbbm{Q}$}\fi}}
	\def\RR{{\ifmmode{\mathbbm{R}}\else{$\mathbbm{R}$}\fi}}
	\def\TT{{\ifmmode{\mathbbm{T}}\else{$\mathbbm{T}$}\fi}}
	\def\UU{{\ifmmode{\mathbbm{U}}\else{$\mathbbm{U}$}\fi}}
	\def\ZZ{{\ifmmode{\mathbbm{Z}}\else{$\mathbbm{Z}$}\fi}}
	\def\A{{\ifmmode{\mathscr{A}}\else{$\mathscr{A}$}\fi}}
	\def\B{{\ifmmode{\mathscr{B}}\else{$\mathscr{B}$}\fi}}
	\def\C{{\ifmmode{\mathscr{C}}\else{$\mathscr{C}$}\fi}}
	\def\D{{\ifmmode{\mathscr{D}}\else{$\mathscr{D}$}\fi}}
	\def\E{{\ifmmode{\mathscr{E}}\else{$\mathscr{E}$}\fi}}
	\def\F{{\ifmmode{\mathscr{F}}\else{$\mathscr{F}$}\fi}}
	\def\G{{\ifmmode{\mathscr{G}}\else{$\mathscr{G}$}\fi}}
	\def\H{{\ifmmode{\mathscr{H}}\else{$\mathscr{H}$}\fi}}
	\def\I{{\ifmmode{\mathscr{I}}\else{$\mathscr{I}$}\fi}}
	\def\J{{\ifmmode{\mathscr{J}}\else{$\mathscr{J}$}\fi}}
	\def\K{{\ifmmode{\mathscr{K}}\else{$\mathscr{K}$}\fi}}
	\def\L{{\ifmmode{\mathscr{L}}\else{$\mathscr{L}$}\fi}}
	\def\M{{\ifmmode{\mathscr{M}}\else{$\mathscr{M}$}\fi}}
	\def\N{{\ifmmode{\mathscr{N}}\else{$\mathscr{N}$}\fi}}
	\def\O{{\ifmmode{\mathscr{O}}\else{$\mathscr{O}$}\fi}}
	\def\P{{\ifmmode{\mathscr{P}}\else{$\mathscr{P}$}\fi}}
	\def\Q{{\ifmmode{\mathscr{Q}}\else{$\mathscr{Q}$}\fi}}
	\def\R{{\ifmmode{\mathscr{R}}\else{$\mathscr{R}$}\fi}}
	\def\S{{\ifmmode{\mathscr{S}}\else{$\mathscr{S}$}\fi}}
	\def\T{{\ifmmode{\mathscr{T}}\else{$\mathscr{T}$}\fi}}
	\def\U{{\ifmmode{\mathscr{U}}\else{$\mathscr{U}$}\fi}}
	\def\V{{\ifmmode{\mathscr{V}}\else{$\mathscr{V}$}\fi}}
	\def\W{{\ifmmode{\mathscr{W}}\else{$\mathscr{W}$}\fi}}
	\def\X{{\ifmmode{\mathscr{X}}\else{$\mathscr{X}$}\fi}}
	\def\Y{{\ifmmode{\mathscr{Y}}\else{$\mathscr{Y}$}\fi}}
	\def\Z{{\ifmmode{\mathscr{Z}}\else{$\mathscr{Z}$}\fi}}
	\newtheoremstyle{slanted}
	{}
	{}
	{\slshape}
	{}
	{\bfseries}
	{.}
	{ }
	{}
	\theoremstyle{slanted}
	\newtheorem{theo}{Theorem}[section]
	\newtheorem{prop}[theo]{Proposition}
	\newtheorem{lemma}[theo]{Lemma}
	\newtheorem{definition}[theo]{Definition}
	\def\ind#1{\mathbbmss{1}_{#1}}
	\def\Id{\mathop{\mbox{Id}}}
 	\newcommand{\tend}[2]{\xrightarrow[#1\to#2]{}}
 	\newcommand{\converge}[3]{\xrightarrow[#1\to#2]{#3}}
	\newcommand{\esp}[2][{}]{{\EE}_{#1}\!\!\left[#2\right]} 
	\newcommand{\espc}[3][{}]{{\EE}_{#1}\left[#2\,\left|\,#3\right.\right]}
	\def\ind#1{\mathbbmss{1}_{#1}}
\renewcommand{\X}{\mathbf{X}}
\newcommand{\Xt}{\widetilde{\X}}
\newcommand{\Xc}{\X_{\C}}
\newcommand{\Cb}{\overline{\C}}
\newcommand{\Xb}{\overline{\X}}
\begin{document}
\bibliographystyle{amsplain}

\title{Notes on Austin's multiple ergodic theorem}
\author{Thierry de la Rue}
\date{}

\maketitle

\begin{abstract}
	The purpose of this note is to present my understanding of Tim Austin's proof of the multiple ergodic theorem for commuting transformations, emphasizing on the use of joinings, extensions and factors. The existence of a sated extension, which is a key argument in the proof, is presented in a general context.
\end{abstract}

\section{Introduction}
The norm convergence of multiple ergodic averages for commuting transformations (Theorem~\ref{theo:main} below) was first proved in 2008 by Terence Tao~\cite{Tao2008}. We intend to present here the quite different proof proposed by Tim Austin~\cite{Austin2008} using the machinery of joinings, extensions and factors. This text is written after Austin's talk at the conference \textit{Dynamical Systems and Randomness}, (held in Paris, Institut Henri Poincaré, May 2009), and a short conversation with him following his talk.
The major part is, as far as I understand it, quite faithful to Austin's original proof. The only slightly original contribution is the proof of the existence of a sated extension, which is presented in a general context.

\begin{theo}
\label{theo:main}
 Let $d\ge 1$, and $T_1,\ldots,T_d$ be $d$ commuting, measure-preserving invertible transformations of the standard Borel probability space $(X,\A,\mu)$. Then for any choice of $f_1,\ldots,f_d\in L^\infty(\mu)$, the multiple ergodic averages
$$ \dfrac{1}{N}\sum_{n=1}^{N}f_1\circ T_1^n\cdots f_d\circ T_d^n $$
converge in $L^2(\mu)$ as $N\to\infty$.
\end{theo}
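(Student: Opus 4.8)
Following Austin, the point is \emph{not} to describe the characteristic factors of these averages explicitly (as Host--Kra do for powers of a single transformation), but to pass to a conveniently chosen extension of $(X,\A,\mu,T_1,\dots,T_d)$ — call this system $\X$ — in which the factors produced by the standard van der Corput reduction are automatically as large as they can be, so that the reduction closes on itself. The first, purely formal, observation is that this passage is legitimate: if $\pi\colon\mathbf Y\to\X$ is a factor map intertwining the two $d$-tuples of transformations, then $g\mapsto g\circ\pi$ is an isometric embedding of $L^2(\X)$ into $L^2(\mathbf Y)$, with closed image, carrying the average $A_N^{\X}(f_1,\dots,f_d)$ to $A_N^{\mathbf Y}(f_1\circ\pi,\dots,f_d\circ\pi)$; hence convergence for every $L^\infty$ input on one extension of $\X$ implies it for $\X$, and it suffices to prove the theorem for a single well-chosen extension.

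\textbf{The van der Corput / PET reduction.} Write $A_N=\frac1N\sum_{n=1}^N\prod_{i=1}^d f_i\circ T_i^{\,n}$. The van der Corput inequality in $L^2(\mu)$ bounds $\limsup_N\|A_N\|_2^2$ by an average over shifts $h$ of inner products $\langle A_N(\vec f),A_N(\vec f_h)\rangle$, which after reindexing are again non-conventional averages, now attached to a ``derived'' tuple of transformations (built from the differences $T_iT_j^{-1}$) and to the functions $\overline{f_i}\cdot(f_i\circ T_i^{\,h})$. A Bergelson-type PET induction on the complexity of the tuple shows that finitely many such steps reach a degenerate average controlled by the mean ergodic theorem. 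The bookkeeping delivers, for each $i$, a \emph{characteristic factor} $\mathbf Z_i$ of $\X$ — roughly, the factor generated by the functions invariant under the sub-actions appearing in the reduction — such that replacing $f_i$ by $\espc{f_i}{\mathbf Z_i}$ perturbs $\limsup_N\|A_N\|_2$ by an arbitrarily small amount; moreover each $\mathbf Z_i$ is obtained from $\X$ by a natural, idempotent factor operation.

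\textbf{Existence of sated extensions.} The obstruction is that $\mathbf Z_i$ computed inside $\X$ may be too small: in an extension $\mathbf Y\to\X$ the projection of $f_i\in L^2(\X)$ onto the corresponding factor of $\mathbf Y$ may strictly dominate its projection onto $\mathbf Z_i$, so the reduction does not stabilise within $\X$. Say that $\X$ is \emph{sated} for the idempotent class $\C$ generated by these operations if, for every further extension $\mathbf Y\to\X$ and every $f\in L^2(\X)$, the conditional expectation of $f$ onto the $\C$-factor of $\mathbf Y$ is already measurable with respect to the $\C$-factor of $\X$. I would prove, for an arbitrary idempotent class $\C$ (this is the ``general context''), that every system admits a $\C$-sated extension: build a countable tower $\X=\X_0\leftarrow\X_1\leftarrow\cdots$, where $\X_{k+1}$ repairs one satedness defect of $\X_k$ via a relatively independent joining over the relevant factor that makes a prescribed ``defecting'' function correctly measurable, arrange the repairs so that every (function, operation) pair is treated cofinally, and set $\Xb=\varprojlim\X_k$. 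Since the ``energy'' $\|\espc{f}{\,\cdot\,}\|_2^2$ is nondecreasing along extensions and bounded by $\|f\|_2^2$, the defects are exhausted in the limit and $\Xb$ is $\C$-sated; one more diagonal iteration over the countably many operations produces a \emph{fully} sated extension.

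\textbf{Closing the argument, and the main difficulty.} Now replace $\X$ by a fully sated extension. In the reduction of the second step the factors $\mathbf Z_i$ are then ``self-characteristic'': projecting the $f_i$ onto them and re-running van der Corput cannot enlarge anything, so the PET recursion terminates inside $\X$, leaving averages of functions measurable with respect to a strictly simpler sub-structure — in the cleanest case, with respect to factors on which the acting $\ZZ^d$ has fewer independent directions — where either the mean ergodic theorem applies directly or an induction on $d$ does. This yields convergence of $A_N$ in $L^2(\X)$, hence in the original system by the first step. I expect the genuine work to lie in the second and third steps \emph{jointly}: one must organise the van der Corput / PET bookkeeping (most smoothly by working with an auxiliary self-joining of $\X$ rather than $\X$ itself) so that the factors it produces really are instances of one fixed idempotent class, and then establish that this class has sated extensions — the inverse-limit construction being delicate precisely because satedness quantifies over \emph{all} further extensions and must be shown to survive the limit.
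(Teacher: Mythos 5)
The architecture you describe --- pass to an extension on which the factors produced by the van der Corput step are automatically as large as possible, so that the reduction closes on itself --- is indeed Austin's strategy, and your first observation (convergence on an extension implies convergence downstairs) is correct. But the proposal stops at the level of a plan exactly where the content lies, and two of its steps would not go through as stated. First, there is no PET induction here: for linear averages along commuting transformations a single application of van der Corput suffices, and what makes it close up is an induction on $d$ that your sketch never sets up. The paper's class $\C$ consists of the systems with $\A=\I^{T_1}\vee\I^{T_2T_1^{-1}}\vee\cdots\vee\I^{T_dT_1^{-1}}$; on such systems the $d$-fold average reduces to a $(d-1)$-fold one, and the induction hypothesis is used a second time to construct the Furstenberg self-joining $\lambda$ (the ``auxiliary self-joining'' you allude to), whose very existence is the $(d-1)$-case applied to indicator functions. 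The characteristic factor is then the largest $\C$-factor $\Xt_\C$ of that self-joined system, and the whole point of satedness is to transfer $\espc[\X]{f_1}{\Xc}=0$ into $\espc[\Xt]{f_1(x_1)}{\Xt_\C}=0$. Without specifying the class and running this induction, your ``strictly simpler sub-structure where the mean ergodic theorem applies'' has not been exhibited.

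Second, the existence of sated extensions. Your inverse-limit, energy-increment tower is the standard construction (it is essentially what Austin does in his later work on satedness), and it can be made to work; but you yourself flag the decisive verification --- that satedness, which quantifies over \emph{all} further extensions, survives the inverse limit --- and leave it open, so as written this is a gap rather than a proof. The paper avoids the tower entirely: introduce the class $\Cb$ of factors of $\C$-systems, which is stable under factors and countable joinings; prove, via the Glasner--Thouvenot--Weiss / Lema\'nczyk--Parreau--Thouvenot lemma on non-disjointness, that \emph{every} system is automatically $\Cb$-sated; characterize $\C$-satedness as the equality $\Xc=\X_{\Cb}$; and then obtain a $\C$-sated extension in a single step as the relatively independent joining of $\X$ with a $\C$-extension of $\X_{\Cb}$ over their common factor $\X_{\Cb}$. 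If you keep your route, the missing lemma is precisely the stability of satedness under inverse limits (or the exhaustion argument carried out in full); if you adopt the paper's, no limit is needed at all.
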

 
\subsection*{The strategy}
The case $d=1$ corresponds to the standard ergodic theorem of Von Neumann, and in this case the limit is clearly identified, as the orthogonal projection of the function $f_1$ on the subspace of $L^2$-functions which are measurable with respect to the factor $\sigma$-algebra 
$$\I^{T_1} := \Bigl\{A\in\A:\ \mu(A\Delta T_1^{-1}A)=0\Bigr\}.$$ 
(This factor $\sigma$-algebra is called the \textit{isotropy factor} by Austin; isotropy factors play a crucial role here and we will use the above notation for several transformations in the sequel.)

The proof for $d\ge2$ is presented by induction on $d$: We assume that $d\ge2$ is such that Theorem~\ref{theo:main} has already been proved up to the case of $d-1$ commuting transformations. Then we identify a simple class $\C$ of systems $(X,\A,\mu,T_1,\ldots,T_d)$ with $d$ commuting transformations for which the desired result is easily deduced from the $(d-1)$-case. Next step consists in the introduction of a larger class of systems, the so-called \textit{$\C$-sated systems}. (Note that the notion of \emph{satedness} is not explicit in~\cite{Austin2008}: It has been formalized in a subsequent work by Austin~\cite{Austin2009} dealing also with some polynomial sequences.) The $\C$-sated systems are characterized by a quite simple structure of their joinings with any $\C$-system, and this enables us to prove for them the theorem, using the induction hypothesis and  a version of Van der Corput lemma. Finally, and this is the point where the machinery of joinings plays its crucial role, we show that any system possesses an extension which is $\C$-sated. Since we know that the theorem holds for $\C$-sated systems, it obviously holds for all their factors, hence for all systems.

For the sake of simplicity, we first present the induction step passing from one to two commuting transformations, admitting the existence of a $\C$-sated extension for any system. Then we will see how the same argument can be generalized to pass from $d-1$ to $d$ transformations. Finally, in a completely independent section, we prove a general result on joinings showing why any system admits a $\C$-sated extension, achieving the proof of Theorem~\ref{theo:main}.

\section{The case of two commuting transformations}
\label{sec:d=2}
In all this section, we assume $d=2$ and we present the argument showing how the theorem for two commuting transformations can be proved, using the well-known result in the case of a single transformation.

\subsection{$\C$-systems}
We first observe that there are two very simple cases in which the convergence
in $L^2$ of the ergodic averages
\begin{equation}
\label{eq:average,d=2}
 \dfrac{1}{N}\sum_{n=1}^{N}f_1\circ T_1^n f_2\circ T_2^n
\end{equation}
is a trivial consequence of the single-transformation case:
\begin{itemize}
 \item If $T_1=\Id$, which amounts to saying that the isotropy factor $\I^{T_1}$ is the whole $\sigma$-algebra $\A$: The above ergodic average reduces to $f_1$ times an ergodic average for the single transformation $T_2$. Obviously, it is enough that $f_1$ be $\I^{T_1}$-measurable to get this reduction.
\item If $T_1=T_2$, in other words if the isotropy factor $\I^{T_2T_1^{-1}}$ is the whole $\sigma$-algebra $\A$: Then \eqref{eq:average,d=2} reduces to an ergodic average for the product $f_1f_2$ and the single transformation $T_1=T_2$. Note that this reduction holds as soon as $f_1$ is measurable with respect to $\I^{T_2T_1^{-1}}$.
\end{itemize}

Now, we introduce the class of $\C$-systems as the class of systems $\X=(X,\A,\mu,T_1,T_2)$ for which
$$ \A=\I^{T_1}\vee\I^{T_2T_1^{-1}}. $$
In other words, $\X$ is a $\C$-system if it is isomorphic to a joining of two systems of the form $\mathbf{X_1}=(X_1,\A_1,\mu_1,\Id,S)$ and $\mathbf{X_2}=(X_2,\A_2,\mu_2,T,T)$. In a system of the class $\C$, any bounded measurable function $f_1$ can be arbitrarily well approximated in $L^2$ by a finite sum of products of the form $gh$, where $g$ is $\I^{T_1}$-measurable and $h$ is $\I^{T_2T_1^{-1}}$-measurable. For each such term $g h$, we can simultaneously apply the two reductions explained above, and we get that the $L^2$-convergence of the ergodic averages~\eqref{eq:average,d=2} holds in any $\C$-system.

\subsection{$\C$-sated systems}

\subsubsection{Looking for characteristic factors}
In any system $\X=(X,\A,\mu,T_1,T_2)$, it is now natural to consider the factor $\sigma$-algebra 
$$ \Xc := \I^{T_1}\vee\I^{T_2T_1^{-1}}. $$
Considering the action of $T_1,T_2$ on this factor, we obviously get a $\C$-system which we also denote by $\Xc$. It is straightforward to check that, in fact, this factor is the largest factor in $\X$ (in the sense of inclusion of $\sigma$-algebras) on which the action of the transformation gives rise to a $\C$-system: We call it the \textit{largest $\C$-factor of $\X$}.

Observe that if $f_1$ is measurable with respect to $\Xc$, the same argument as above proves the convergence in $L^2$ of the ergodic averages \eqref{eq:average,d=2}. Now, we are looking for simple conditions on $\X$ ensuring that, when studying the convergence of these ergodic averages, we can replace $f_1$ by its projection $\EE[f_1|\Xc]$, which would immediately lead to the desired conclusion. In other words, we are looking for conditions implying
\begin{equation}
\label{eq:convergence to 0}
  \left\| \dfrac{1}{N}\sum_{n=1}^{N}f_1\circ T_1^n f_2\circ T_2^n \right\|_{L^2} \tend{N}{\infty} 0 \quad \mbox{as soon as }\EE[f_1|\Xc]=0.
\end{equation}
(Although we shall not explicitely use here the notion of \textit{characteristic factors}, we can note that the above condition is equivalent to ``$(\Xc,\A)$ is a pair of characteristic factors for the convergence of the ergodic averages \eqref{eq:average,d=2}'', see Definition 4.1 in \cite{Austin2008}.)

\subsubsection{Van der Corput lemma}

To obtain the convergence to 0 in \eqref{eq:convergence to 0}, we will make use of the following lemma.

\begin{lemma}[Van der Corput]
\label{lemma:Van der Corput}
 Let $(u_n)$ be a bounded sequence in a Hilbert space. If
$$ \lim_{H\to\infty}\lim_{N\to\infty} \dfrac{1}{H} \sum_{h=1}^{H} \dfrac{1}{N} \sum_{n=1}^{N} <u_n u_{n+h}> = 0, $$
then
$$ \lim_{N\to\infty}  \left\| \dfrac{1}{N} \sum_{n=1}^{N} u_n \right\| = 0. $$
\end{lemma}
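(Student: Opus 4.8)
The plan is to prove the Van der Corput lemma by the standard averaging trick: compare the Cesàro average $\frac1N\sum_{n=1}^N u_n$ with the doubly-averaged quantity $\frac1N\sum_{n=1}^N\left(\frac1H\sum_{h=0}^{H-1}u_{n+h}\right)$, which differ by only $O(H/N)$ terms near the boundary, hence have the same asymptotic norm as $N\to\infty$ for each fixed $H$. So it suffices to bound $\left\|\frac1N\sum_{n=1}^N v_n^{(H)}\right\|$ where $v_n^{(H)}:=\frac1H\sum_{h=0}^{H-1}u_{n+h}$, and then let $H\to\infty$ at the end.

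First I would fix $H$ and apply the Cauchy--Schwarz inequality (or convexity of $\|\cdot\|^2$) to get
\begin{equation*}
\left\| \frac1N\sum_{n=1}^N v_n^{(H)} \right\|^2 \le \frac1N\sum_{n=1}^N \left\| v_n^{(H)} \right\|^2 = \frac1N\sum_{n=1}^N \frac{1}{H^2}\sum_{h,h'=0}^{H-1} \langle u_{n+h}, u_{n+h'}\rangle .
\end{equation*}
Expanding the inner product sum and exchanging the order of summation, the diagonal terms $h=h'$ contribute $O(1/H)$ (using boundedness of $(u_n)$), and the off-diagonal terms, after re-indexing so that the difference $h'-h$ plays the role of the lag, are controlled by averages of $\langle u_n, u_{n+k}\rangle$ over $n$ and over $k\in\{1,\dots,H-1\}$ (together with their conjugates for negative lags, which in a real Hilbert space are simply equal). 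Taking $\limsup_{N\to\infty}$ and then $\limsup_{H\to\infty}$, the diagonal part vanishes and the off-diagonal part is exactly the hypothesis, so the whole right-hand side tends to $0$.

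Two routine points need care. First, to pass from the double average $\frac1H\sum_{h,h'}$ to the single average $\frac1H\sum_{h=1}^H$ of the hypothesis, one notes that for each fixed lag $k$ the Cesàro average $\frac1N\sum_n\langle u_n,u_{n+k}\rangle$ has the same $N\to\infty$ behaviour as $\frac1N\sum_n\langle u_{n+h},u_{n+h+k}\rangle$ (another boundary-shift argument), so the weighting of the lags coming from the number of pairs $(h,h')$ with $h'-h=k$ collapses, up to lower-order terms, to the uniform average over $k=1,\dots,H-1$. Second, one must be a little careful about whether $\lim$ or $\limsup$ is used: the statement asserts genuine limits, so I would phrase the argument with $\limsup$ throughout and conclude $\limsup_{N\to\infty}\left\|\frac1N\sum_{n=1}^N u_n\right\|=0$, which gives the limit.

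The main obstacle is purely bookkeeping: keeping track of the various $O(H/N)$ and $O(1/H)$ error terms introduced by truncating sums at the boundary and by the diagonal contribution, and organizing the two nested limits ($N\to\infty$ inside, $H\to\infty$ outside) so that each error is sent to zero in the correct order. There is no conceptual difficulty; the only genuine input is Cauchy--Schwarz plus the observation that finitely many boundary terms are asymptotically negligible under a Cesàro average of a bounded sequence.
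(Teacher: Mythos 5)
Your proposal is correct and follows essentially the same route as the paper's Annex~A: replace $\frac1N\sum_n u_n$ by the double average $\frac1N\sum_n\frac1H\sum_h u_{n+h}$ up to $O(H/N)$, apply Cauchy--Schwarz (convexity of $\|\cdot\|^2$), split the resulting double sum over $(h,h')$ into the diagonal part of size $O(1/H)$ and the off-diagonal parts, and re-index the latter by the lag $h'-h$ so that the hypothesis applies after the nested limits $N\to\infty$ then $H\to\infty$. The points you flag as needing care (the non-uniform weighting of the lags and the boundary $O(H/N)$ errors) are exactly the bookkeeping the paper carries out.
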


For the sake of completeness, a proof of this lemma is included in Annex~A.

\subsubsection{A sufficient condition for the convergence}

In view of Lemma~\ref{lemma:Van der Corput}, we are led to study the expression
$$ \dfrac{1}{H} \sum_{h=1}^{H} \dfrac{1}{N} \sum_{n=1}^{N} \int_X f_1\circ T_1^n\, f_2\circ T_2^n \, \overline{f_1}\circ T_1^{n+h} \, \overline{f_2}\circ T_2^{n+h} d\mu. $$
Using the invariance of $\mu$ with respect to $T_1$, we can rewrite each integral in the form
$$ \int_X f_1 \, \overline{f_1}\circ T_1^h \, (f_2 \, \overline{f_2} \circ T_2^h)\circ (T_2 T_1^{-1})^n d\mu. $$
For each fixed $h$, the usual ergodic theorem for the single transformation $T_2 T_1^{-1}$ gives
\begin{multline*}
 \lim_{N\to\infty}\dfrac{1}{N}\sum_{n=1}^N \int_X f_1  \, \overline{f_1}\circ T_1^h  \, (f_2  \, \overline{f_2} \circ T_2^h)\circ (T_2 T_1^{-1})^n d\mu \\
= \int_X f_1  \, \overline{f_1}\circ T_1^h \, \esp{f_2  \, \overline{f_2} \circ T_2^h\, |\, \I^{T_2 T_1^{-1}}} d\mu. 
\end{multline*}
Now, it is convenient to  view the latter integral as 
\begin{equation}
 \label{eq:self-joining}
\int_{X\times X} f_1(x_1) \, \overline{f_1 (T_1^h x_1)}  \, f_2(x_2) \, \overline{f_2 (T_2^h x_2)} \, d (\mu\otimes_{\I^{T_2 T_1^{-1}}}\mu) (x_1,x_2),
\end{equation}
where $\mu\otimes_{\I^{T_2 T_1^{-1}}}\mu$ denotes the relatively independent self-joining of $\X$ over its factor $\sigma$-algebra $\I^{T_2 T_1^{-1}}$. Remark that this probability distribution is invariant under the action of the transformation $\widetilde{T_1}:=T_1\otimes T_2:\ (x_1,x_2)\mapsto (T_1x_1, T_2x_2)$. Indeed, if $\phi_1$ and $\phi_2$ are bounded measurable functions on $X$, we have
\begin{align*}
 &\int_{X\times X} \phi_1(T_1x_1)\, \phi_2(T_2x_2) d (\mu\otimes_{\I^{T_2 T_1^{-1}}}\mu) (x_1,x_2)\\
	=  &\int_X \esp{\phi_1\circ T_1|\I^{T_2 T_1^{-1}}} \esp{\phi_2\circ T_2|\I^{T_2 T_1^{-1}}} d\mu \\
	=  &\int_X \esp{\phi_1|\I^{T_2 T_1^{-1}}}\circ T_1\, \esp{\phi_2|\I^{T_2 T_1^{-1}}}\circ T_2\, d\mu(x) \quad \parbox[t]{5cm}{(because $\I^{T_2 T_1^{-1}}$ is invariant by both transformations)}\\
	=  &\int_X \esp{\phi_1|\I^{T_2 T_1^{-1}}}\circ T_1\, \esp{\phi_2|\I^{T_2 T_1^{-1}}}\circ T_1\, d\mu(x) \quad \parbox[t]{5cm}{(since on $\I^{T_2 T_1^{-1}}$, $T_1$ and $T_2$ coincide)}\\
	=  &\int_X \esp{\phi_1|\I^{T_2 T_1^{-1}}}\, \esp{\phi_2|\I^{T_2 T_1^{-1}}}\, d\mu(x) \\
	=  &\int_{X\times X} \phi_1(x_1)\, \phi_2(x_2) d (\mu\otimes_{\I^{T_2 T_1^{-1}}}\mu) (x_1,x_2)\\
\end{align*}
Of course, $\mu\otimes_{\I^{T_2 T_1^{-1}}}\mu$ is also invariant by the transformation $\widetilde{T_2}:=T_2\otimes T_2$. This gives us a big system with two commuting transformations $\Xt:=\left(X\times X, \A\otimes\A, \mu\otimes_{\I^{T_2 T_1^{-1}}}\mu, \widetilde{T_1}, \widetilde{T_2}\right)$. Observe that our original system $\X$ is a factor of $\Xt$, which is obtained by considering only the first coordinate.

We have to average \eqref{eq:self-joining} over $h$. Writing this average in $\Xt$ gives
$$ \int_{X\times X} f_1\otimes f_2 (x_1,x_2)\, \dfrac{1}{H}\sum_{h=1}^H \overline{f_1\otimes f_2} \left(\widetilde{T_1}^h(x_1,x_2)\right) d (\mu\otimes_{\I^{T_2 T_1^{-1}}}\mu) (x_1,x_2). $$
Applying the usual ergodic theorem for $\widetilde{T_1}=T_1\otimes T_2$ in $\Xt$, we see that the latter expression converges, as $H\to\infty$, to
$$ \int_{X\times X} f_1\otimes f_2\ \espc[\Xt]{\overline{f_1\otimes f_2}}{\I^{\widetilde{T_1}}} d (\mu\otimes_{\I^{T_2 T_1^{-1}}}\mu). $$
An obvious sufficient condition for this limit to vanish is $\espc[\Xt]{f_1\otimes f_2}{\I^{T_1\otimes T_2}}=0$. The following proposition links this condition with the largest $\C$-factor $\Xt_\C$ of $\Xt$.

\begin{prop}
 If $\espc[\Xt]{f_1(x_1)}{\Xt_\C}=0$, then $\espc[\Xt]{f_1\otimes f_2}{\I^{T_1\otimes T_2}}=0$.
\end{prop}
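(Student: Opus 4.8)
The plan is to use the very special structure of the commuting pair $(\widetilde{T_1},\widetilde{T_2})$ on the joining $\Xt$: namely, the fact that the second coordinate of $X\times X$ is already subsumed by the largest $\C$-factor $\Xt_\C$.

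First I would compute $\widetilde{T_2}\widetilde{T_1}^{-1}$ explicitly. Since $\widetilde{T_1}=T_1\otimes T_2$ and $\widetilde{T_2}=T_2\otimes T_2$, one gets $\widetilde{T_2}\widetilde{T_1}^{-1}=(T_2T_1^{-1})\otimes\Id$. The crucial observation is that this transformation acts as the identity on the second coordinate, so that every set of the form $X\times B$ with $B\in\A$ is invariant under it. Hence the factor $\sigma$-algebra $\A_2:=\{\emptyset,X\}\otimes\A$ generated by the second coordinate satisfies $\A_2\subseteq\I^{\widetilde{T_2}\widetilde{T_1}^{-1}}\subseteq\Xt_\C$. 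In particular, the function $(x_1,x_2)\mapsto f_2(x_2)$ is $\Xt_\C$-measurable, and it is bounded since $f_2\in L^\infty$.

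Next I would factor $f_1\otimes f_2$ as $f_1(x_1)\cdot f_2(x_2)$ and pull the bounded $\Xt_\C$-measurable factor $f_2(x_2)$ out of the conditional expectation relative to $\Xt_\C$: this gives $\espc[\Xt]{f_1\otimes f_2}{\Xt_\C}=f_2(x_2)\,\espc[\Xt]{f_1(x_1)}{\Xt_\C}=0$ by hypothesis. Finally, because $\I^{T_1\otimes T_2}=\I^{\widetilde{T_1}}$ is a sub-$\sigma$-algebra of $\Xt_\C=\I^{\widetilde{T_1}}\vee\I^{\widetilde{T_2}\widetilde{T_1}^{-1}}$, the tower property of conditional expectation yields $\espc[\Xt]{f_1\otimes f_2}{\I^{T_1\otimes T_2}}=\espc[\Xt]{\espc[\Xt]{f_1\otimes f_2}{\Xt_\C}}{\I^{T_1\otimes T_2}}=0$, which is the claimed conclusion.

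Essentially all of the content is concentrated in the structural observation that $\A_2\subseteq\Xt_\C$; the remaining steps are just the pull-out and tower properties of conditional expectations, together with the already-recorded facts that $\Xt_\C$ is a genuine factor $\sigma$-algebra of $\Xt$ and that $\I^{\widetilde{T_1}}$ is contained in it. I therefore do not expect a real obstacle: the only thing to get right is the bookkeeping identifying $\widetilde{T_2}\widetilde{T_1}^{-1}=(T_2T_1^{-1})\otimes\Id$ and noticing that it leaves the second coordinate untouched.
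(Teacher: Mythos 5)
Your proposal is correct and follows essentially the same route as the paper: both rest on the observation that the second-coordinate $\sigma$-algebra lies in $\Xt_\C$ (because $\widetilde{T_1}$ and $\widetilde{T_2}$ coincide there, i.e.\ $\widetilde{T_2}\widetilde{T_1}^{-1}$ fixes the second coordinate), which lets you pull $f_2(x_2)$ out of the conditional expectation, and then conclude via $\I^{\widetilde{T_1}}\subset\Xt_\C$ and the tower property.
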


\begin{proof}
Observe that $\I^{\widetilde{T_1}}$ is a $\C$-factor of $\Xt$, hence we have $\I^{\widetilde{T_1}}\subset \Xt_\C$ by definition of the maximal $\C$-factor. Therefore, a sufficient condition for the above conclusion to hold is $\espc[\Xt]{f_1\otimes f_2}{\Xt_\C}=0$. Moreover, we can also note that $(x_1,x_2)\mapsto f_2(x_2)$ is also $\Xt_\C$-measurable: Indeed, on the $\sigma$-algebra generated by the second coordinate, the transformations $\widetilde{T_1}$ and $\widetilde{T_2}$ coincide, hence this $\sigma$-algebra is itself a $\C$-factor of $\Xt$. We can then write
$$ \espc[\Xt]{f_1\otimes f_2}{\Xt_\C} = f_2(x_2)\, \espc[\Xt]{f_1(x_1)}{\Xt_\C}, $$
from which the proposition follows immediately.
\end{proof}

\subsubsection{$\C$-sated systems}
From the above reasoning, we see that if our system $\X$ is such that
\begin{equation}
 \label{eq:pleasant}
 \espc[\X]{f_1}{\X_\C}=0 \Longrightarrow \espc[\Xt]{f_1(x_1)}{\Xt_\C},
\end{equation}
then the convergence in $L^2$ of the ergodic averages~\eqref{eq:average,d=2} holds. Observe that in the RHS of~\eqref{eq:pleasant}, we are considering together two important factors of the big system $\Xt$: On the one hand the factor generated by the first coordinate (which, as already mentioned, is nothing but the original system $\X$), and on the other hand the largest $\C$-factor $\Xt_\C$ of $\X$. In other words, we are considering a joining of our system $\X$ with the $\C$-system $\Xt_\C$. This motivates the following fundamental definition.

\begin{definition}
\label{def:sated}
 The system $\X$ is said to be \emph{$\C$-sated} if, for any joining $\lambda$ of $\X$ with a $\C$-system $\mathbf{Y}$ and any bounded measurable function $f$ on $\X$, we have
\begin{equation}
 \label{eq:C-sated}
\espc[\lambda]{f(x)}{\mathbf{Y}} = \EE_{\lambda}\Bigl[ \, \espc[\X]{f(x)}{\Xc} \,\Big|\, \mathbf{Y} \Bigr].
\end{equation}
In other words, the system is $\C$-sated if any joining of $\X$ with a $\C$-system is relatively independent over the largest $\C$-factor $\Xc$ of $\X$.
\end{definition}
Of course, any $\C$-sated system satisfies~\eqref{eq:pleasant}. Hence, a partial conclusion up to this point can be stated as follows:

\begin{prop}
 If $\X=(X,\A,\mu,T_1,T_2)$ is a $\C$-sated system, then the ergodic averages~\eqref{eq:average,d=2} converge in $L^2$ for any choice of $f_1$ and $f_2$ in $L^\infty(\mu)$.
\end{prop}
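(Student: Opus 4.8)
The plan is to assemble the ingredients already in place: convergence of the averages~\eqref{eq:average,d=2} when $f_1$ is $\Xc$-measurable, the Van der Corput reduction of Lemma~\ref{lemma:Van der Corput}, and the $\C$-satedness hypothesis~\eqref{eq:C-sated}. Fix $f_1,f_2\in L^\infty(\mu)$. First I would write $f_1 = \espc[\X]{f_1}{\Xc} + f_1''$ with $f_1'' := f_1 - \espc[\X]{f_1}{\Xc}$; both summands lie in $L^\infty(\mu)$, and by linearity of~\eqref{eq:average,d=2} in $f_1$ it suffices to treat each of them separately.

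For the $\Xc$-measurable term $\espc[\X]{f_1}{\Xc}$, I would run the $\C$-system argument given above word for word: since $\Xc$, equipped with the restricted action of $T_1,T_2$, is a $\C$-system, one approximates $\espc[\X]{f_1}{\Xc}$ in $L^2$ by finite sums of products $gh$ with $g$ being $\I^{T_1}$-measurable and $h$ being $\I^{T_2T_1^{-1}}$-measurable, applies the two elementary reductions to each such product so that it becomes a single-transformation ergodic average, and absorbs the approximation error into $\|f_2\|_\infty$. This yields $L^2$-convergence of $\frac{1}{N}\sum_{n=1}^N \espc[\X]{f_1}{\Xc}\circ T_1^n\, f_2\circ T_2^n$.

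For the complementary term, I would apply Lemma~\ref{lemma:Van der Corput} to $u_n := f_1''\circ T_1^n\, f_2\circ T_2^n$, which is bounded in $L^2(\mu)$. The computation carried out just before the Proposition — rewrite each correlation integral using the $T_1$-invariance of $\mu$, pass to the limit in $N$ via the single-transformation ergodic theorem for $T_2T_1^{-1}$, recognize the outcome as an integral against the relatively independent self-joining $\mu\otimes_{\I^{T_2T_1^{-1}}}\mu$ on $\Xt$, and finally pass to the limit in $H$ via the single-transformation ergodic theorem for $\widetilde{T_1}=T_1\otimes T_2$ — shows that
$$ \lim_{H\to\infty}\lim_{N\to\infty} \frac{1}{H}\sum_{h=1}^H \frac{1}{N}\sum_{n=1}^N \langle u_n, u_{n+h}\rangle = \int_{X\times X} f_1''\otimes f_2\ \espc[\Xt]{\overline{f_1''\otimes f_2}}{\I^{\widetilde{T_1}}}\, d(\mu\otimes_{\I^{T_2T_1^{-1}}}\mu). $$
It remains to see the right-hand side vanishes. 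By construction $\espc[\X]{f_1''}{\Xc}=0$; moreover $\Xt$ is a joining of $\X$ (realized as its first-coordinate factor) with the $\C$-system $\Xt_\C$ (its largest $\C$-factor), so the defining identity~\eqref{eq:C-sated} of $\C$-satedness, applied with $\mathbf{Y}=\Xt_\C$ and $f=f_1''$, gives
$$ \espc[\Xt]{f_1''(x_1)}{\Xt_\C} = \EE_{\Xt}\Bigl[\, \espc[\X]{f_1''}{\Xc} \,\Big|\, \Xt_\C \Bigr] = 0, $$
which is exactly property~\eqref{eq:pleasant}. The Proposition then forces $\espc[\Xt]{f_1''\otimes f_2}{\I^{\widetilde{T_1}}}=0$, so the displayed double limit equals $0$, and Lemma~\ref{lemma:Van der Corput} yields $\bigl\|\frac{1}{N}\sum_{n=1}^N u_n\bigr\|_{L^2}\to 0$. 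Adding the two parts completes the proof.

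The main difficulty has in fact already been dealt with in the construction of $\Xt$, the verification that $\mu\otimes_{\I^{T_2T_1^{-1}}}\mu$ is invariant under both $\widetilde{T_1}$ and $\widetilde{T_2}$, and the proof of the Proposition; so I do not anticipate a genuine obstacle at this last stage. The only points requiring a little care are (i) checking that Van der Corput's hypothesis is literally met, namely that the inner limit in $N$ exists for each fixed $h$ and that the subsequent limit in $H$ exists, both being direct outputs of the single-transformation ergodic theorem, and (ii) correctly identifying $\Xt$ as a joining of $\X$ with the $\C$-system $\Xt_\C$ so that Definition~\ref{def:sated} is applicable. The rest is linearity, the triangle inequality, and bookkeeping.
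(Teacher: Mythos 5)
Your proposal is correct and follows essentially the same route as the paper: the decomposition $f_1=\espc[\X]{f_1}{\Xc}+f_1''$, the $\C$-system argument for the projected part, and Van der Corput plus the self-joining $\Xt$ plus the satedness identity (yielding condition~\eqref{eq:pleasant}) for the remainder are exactly the pieces the paper assembles. The only point worth noting is that the paper states this proposition as a summary of the preceding computation rather than writing out the decomposition explicitly, so your write-up is if anything slightly more complete.
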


%

\section{The case of $d$ commuting transformations}
\label{sec:general d}
In this section, we assume that $d\ge2$ is such that Theorem~\ref{theo:main} has already been proved in the case of $d-1$ commuting transformations, and we adapt the arguments of the preceding section to see how to prove the ergodic theorem in the case of $\C$-sated systems of $d$ commuting transformations.

In this general case, we define the class of $\C$-systems as the class of systems $\X=(X,\A,\mu,T_1,\ldots,T_d)$ for which
$$ \A=\I^{T_1}\vee\I^{T_2T_1^{-1}}\vee\cdots\vee\I^{T_dT_1^{-1}}. $$

A factor $\sigma$-algebra of a system $\X=(X,\A,\mu,T_1,\ldots,T_d)$ on which the action of $T_1,\ldots,T_d$ defines a $\C$-system will be called a \emph{$\C$-factor of $\X$}. 
In any system $\X$ there always exists a largest $\C$-factor
$$ \Xc := \I^{T_1}\vee\I^{T_2T_1^{-1}}\vee\cdots\vee\I^{T_dT_1^{-1}}. $$
($\Xc$ is defined as a factor $\sigma$-algebra of $\X$, but we will use the same notation to denote the $\C$-system obtained by considering the action of $T_1,\ldots,T_d$ on this sub-$\sigma$-algebra.)

\smallskip

The same argument as in the case $d=2$ proves that convergence in $L^2$ of ergodic averages 
\begin{equation}
\label{eq:average}
 \dfrac{1}{N}\sum_{n=1}^{N}f_1\circ T_1^n \cdots f_d\circ T_d^n
\end{equation}
reduces in the class of $\C$-systems to the case of $d-1$ commuting transformations, and in fact it is enough for this reduction to be valid that $f_1$ be measurable with respect to $\Xc$. Hence we are just looking for conditions ensuring that we can replace $f_1$ in~\eqref{eq:average} by its projection $\espc[\X]{f_1}{\Xc}$.

\smallskip

The class of $\C$-sated systems is defined word for word as in Definition~\ref{def:sated}. Assuming now that $\X=(X,\A,\mu,T_1,\ldots,T_d)$ is a $\C$-sated system, we have to show that for any choice of $f_1,\ldots,f_d\in L^\infty(\mu)$,
\begin{multline}
\label{eq:to prove}
 \espc[\X]{f_1}{\Xc}=0\ \Longrightarrow \\
\lim_{H\to\infty} \lim_{N\to\infty}
\dfrac{1}{H} \sum_{h=1}^{H} \dfrac{1}{N} \sum_{n=1}^{N} 
\int_X f_1\circ T_1^n\cdots f_d\circ T_d^n \, \overline{f_1}\circ T_1^{n+h} \cdots \overline{f_d}\circ T_d^{n+h} d\mu =0,
\end{multline}
which in turn, by Lemma~\ref{lemma:Van der Corput}, implies
$$
\left\| \dfrac{1}{N}\sum_{n=1}^{N}f_1\circ T_1^n \cdots f_d\circ T_d^n \right\|_{L^2} \tend{N}{\infty} 0.
$$


\subsection{Furstenberg self-joining}

Since we have assumed the validity of Theorem~\ref{theo:main} for $d-1$ commuting transformations, the averages
$$ \dfrac{1}{N}\sum_{n=1}^{N} \int_X g_1\circ T_1^n\, g_2\circ T_2^n \cdots g_d\circ T_d^n \, d\mu =
\dfrac{1}{N}\sum_{n=1}^{N} \int_X g_1 \, g_2 \circ (T_2 T_1^{-1})^n \cdots g_d\circ (T_d T_1^{-1})^n \, d\mu
$$
converge for any choice of $g_1,\ldots,g_d$ in $L^\infty(\mu)$. Applying this convergence in the case of indicator functions $g_i=\ind{A_i}$, $i=1,\ldots,d$, it is standard to see that the limit defines a probability measure $\lambda$ on $X^d$ by the formula 
$$ \lambda(A_1\times\cdots\times A_d) := \lim_{N\to\infty} \dfrac{1}{N}\sum_{n=1}^{N} \int_X \ind{A_1}(T_1^n x) \cdots\ind{A_d}(T_d^n x) \, d\mu(x). $$
Moreover, it is straightforward to check that $\lambda$ enjoys the following properties:
\begin{itemize}
 \item $\lambda$ is invariant by the transformation $T_i\otimes\cdots\otimes T_i$ for all $i=1,\ldots,d$;
 \item The $d$ marginal distributions of $\lambda$ are equal to $\mu$;
 \item $\lambda$ is also invariant by the transformation $T_1\otimes T_2\otimes\cdots\otimes T_d$.
\end{itemize}
The above first two properties together mean that $\lambda$ is a $d$-fold self-joining of the system $\X=(X,\A,\mu,T_1,\ldots,T_d)$. this self-joining was introduced by Furstenberg in~\cite{Furstenberg77}, and therefore refered to as \textit{Furstenberg self-joining} by Austin.

As in the case of two commuting transformations, we now define a big system 
$$\Xt:=(X^d,\A^{\otimes d},\lambda,\widetilde{T_1},\widetilde{T_2},\ldots,\widetilde{T_d}),$$
where $\widetilde{T_1}:=T_1\otimes T_2\otimes\cdots\otimes T_d$ and, for $2\le i\le d$, $\widetilde{T_i}:=T_i\otimes T_i\otimes\cdots\otimes T_i$. Observe by considering the first coordinate that $\X$ is a factor of $\Xt$. Note also that, for $2\le i\le d$, $\widetilde{T_1}$ and $\widetilde{T_i}$ coincide on the sub-$\sigma$-algebra $\A_i$ generated by the $i$-th coordinate, from which we can deduce
\begin{equation}
\label{eq:in the C factor}
 \A_2\otimes\cdots\otimes\A_d \subset \Xt_{\C}.
\end{equation}

We now turn back to~\eqref{eq:to prove}. For any fixed $h$, we have by definition of Furstenberg self-joining
\begin{multline}
\dfrac{1}{N} \sum_{n=1}^{N} 
\int_X f_1\circ T_1^n\cdots f_d\circ T_d^n \, \overline{f_1}\circ T_1^{n+h} \cdots \overline{f_d}\circ T_d^{n+h} d\mu\\
  \tend{N}{\infty}  
\int_{X^d} (f_1\otimes \cdots\otimes f_d)\, (\overline{f_1\otimes \cdots\otimes f_d}\circ\widetilde{T_1}^h)\, d\lambda. \end{multline}
Averaging the latter expression over $h\in\{1,\ldots,H\}$, and letting $H$ go to infinity gives, using the mean ergodic theorem for the single transformation $\widetilde{T_1}$,
\begin{equation}
 \label{eq:must vanish}
\int_{X^d} (f_1\otimes \cdots\otimes f_d)\ \espc[\Xt]{\overline{f_1\otimes \cdots\otimes f_d}}{\I^{\widetilde{T_1}}} d\lambda.
\end{equation}
Assume now that $\espc[\X]{f_1}{\Xc}=0$. Then if $\X$ is a $\C$-sated system, we have
$$ \espc[\Xt]{f_1(x_1)}{\Xt_{\C}} = \espc[\X]{f(x)}{\X_{\C}} = 0. $$
Recalling \eqref{eq:in the C factor}, we get
$$ \espc[\Xt]{f_1\otimes\cdots\otimes f_d}{\Xt_{\C}} = f_2(x_2)\cdots f_d(x_d)\ \espc[\Xt]{f_1(x_1)}{\Xt_{\C}} = 0, $$ 
and since $\I^{\widetilde{T_1}}\subset \Xt_{\C}$, 
$$ \espc[\Xt]{f_1\otimes \cdots\otimes f_d}{\I^{\widetilde{T_1}}} = 0, $$
which proves~\eqref{eq:to prove}.

\smallskip

\goodbreak

We thus have proved the following partial result:
\begin{prop}
 If the statement of Theorem~\ref{theo:main} is valid for $d-1$ commuting transformations, then it is also valid for any $\C$-sated system of $d$ commuting transformations.
\end{prop}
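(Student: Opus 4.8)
The plan is simply to collect the ingredients assembled just above into one chain of implications. By Lemma~\ref{lemma:Van der Corput} applied to the bounded sequence $u_n:=f_1\circ T_1^n\cdots f_d\circ T_d^n$ in $L^2(\mu)$, it suffices to prove the implication~\eqref{eq:to prove}. So the first move is a reduction: writing $f_1=\espc[\X]{f_1}{\Xc}+\bigl(f_1-\espc[\X]{f_1}{\Xc}\bigr)$, the part where $f_1$ is $\Xc$-measurable is handled by the $\C$-system reduction to $d-1$ commuting transformations combined with the induction hypothesis, so only the case $\espc[\X]{f_1}{\Xc}=0$ remains, which is exactly~\eqref{eq:to prove}.

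Next I would analyze the correlation averages. For fixed $h$, the $T_1$-invariance of $\mu$ rewrites the inner $n$-average as an ergodic average for the $d-1$ commuting transformations $T_2T_1^{-1},\dots,T_dT_1^{-1}$ applied to suitable bounded functions, so the induction hypothesis makes it converge in $L^2$, hence the integrals converge; specializing to indicator functions produces the Furstenberg self-joining $\lambda$ on $X^d$, with the three invariance/marginal properties listed. Consequently the limit in $n$ of the $h$-th correlation integral equals $\int_{X^d}(f_1\otimes\cdots\otimes f_d)\,(\overline{f_1\otimes\cdots\otimes f_d}\circ\widetilde{T_1}^{\,h})\,d\lambda$, and averaging over $h$ and letting $H\to\infty$ the mean ergodic theorem for the single transformation $\widetilde{T_1}$ turns this into the integral~\eqref{eq:must vanish} against the conditional expectation onto $\I^{\widetilde{T_1}}$.

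Finally I would close with satedness. In the big system $\Xt$ the first-coordinate factor is isomorphic to $\X$; since $\widetilde{T_1}$ and $\widetilde{T_i}$ agree on the $i$-th coordinate $\sigma$-algebra $\A_i$ for $i\ge2$, each such $\A_i$ is a $\C$-factor of $\Xt$, so $\A_2\otimes\cdots\otimes\A_d\subset\Xt_\C$, and likewise $\I^{\widetilde{T_1}}$ is a $\C$-factor, hence $\I^{\widetilde{T_1}}\subset\Xt_\C$. Viewing $\Xt$ as a joining of $\X$ with the $\C$-system $\Xt_\C$, the $\C$-satedness of $\X$ together with $\espc[\X]{f_1}{\Xc}=0$ forces $\espc[\Xt]{f_1(x_1)}{\Xt_\C}=0$; pulling the $\Xt_\C$-measurable factor $f_2(x_2)\cdots f_d(x_d)$ out of the conditional expectation gives $\espc[\Xt]{f_1\otimes\cdots\otimes f_d}{\Xt_\C}=0$, and conditioning once more onto the smaller $\sigma$-algebra $\I^{\widetilde{T_1}}$ shows that~\eqref{eq:must vanish} vanishes, which is~\eqref{eq:to prove}.

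I do not expect a genuine obstacle here: every step is a direct appeal to the induction hypothesis, to the mean ergodic theorem, or to the defining property of $\C$-satedness, the real conceptual work having already been front-loaded into the construction of $\Xt$ and the observation that satedness is precisely what propagates $\espc[\X]{f_1}{\Xc}=0$ to $\Xt$. The one point deserving a little care is checking that $\lambda$ is a bona fide self-joining, invariant under $\widetilde{T_1}$ as well as under each diagonal $\widetilde{T_i}$; this is where the $(d-1)$-transformation case is genuinely used, but it is routine once convergence of the $(d-1)$-fold averages is in hand.
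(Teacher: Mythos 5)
Your proposal is correct and follows essentially the same route as the paper: reduction via the decomposition $f_1=\espc[\X]{f_1}{\Xc}+(f_1-\espc[\X]{f_1}{\Xc})$ and Lemma~\ref{lemma:Van der Corput}, construction of the Furstenberg self-joining $\lambda$ from the $(d-1)$-case, the mean ergodic theorem for $\widetilde{T_1}$, and finally $\C$-satedness applied to the joining of $\X$ with $\Xt_\C$ inside $\Xt$, together with the inclusions $\A_2\otimes\cdots\otimes\A_d\subset\Xt_\C$ and $\I^{\widetilde{T_1}}\subset\Xt_\C$. There is no substantive difference from the argument given in Section~\ref{sec:general d}.
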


\section{Existence of $\C$-sated extensions}
\label{sec:sated extensions}

The purpose of this section is to prove the existence of a $\C$-sated extension for any dynamical system, in a general context including the case we need to finish the proof of Theorem~\ref{theo:main}. An important part of the arguments used below was developped in~\cite{L-R-dlR2003} for the study of another class of systems, namely the class of all factors of all countable self-joinings of a given system. But as mentionned in~\cite{dlR2006}, they work in a quite general setting which we present here in details.

\smallskip

From now on, let $\C$ denote a class of dynamical systems, which we always assume to be stable under taking isomorphisms. As before, we call \emph{$\C$-factor} of a dynamical system $(X,\A,\mu,T_1,\ldots,T_d)$ any factor sub-$\sigma$-algebra on which the action of $T_1,\ldots,T_d$ defines a system in the class $\C$. In the particular case of class $\C$ used in the preceding sections, it was quite obvious to see that any system admits a largest $\C$-factor. this is in fact a general result provided a stability assumption on $\C$.

\begin{lemma}
 If the class $\C$ is stable under taking countable joinings, then any system $\X$ admits a largest $\C$-factor, which we denote by $\Xc$.
\end{lemma}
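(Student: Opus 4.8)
The plan is to construct the largest $\C$-factor as the $\sigma$-algebra generated by the union of \emph{all} $\C$-factors of $\X$, and then show that this $\sigma$-algebra is itself a $\C$-factor. First I would observe that the collection of $\C$-factors of $\X$ is nonempty (it contains the trivial $\sigma$-algebra, since a one-point system is in $\C$ by stability under countable joinings, which includes the empty/trivial joining, or one argues this directly), and is closed under the operations we need. Let $(\B_i)_{i\in I}$ be the family of all $\C$-factors of $\X$; since $(X,\A,\mu)$ is a standard Borel probability space, the $\sigma$-algebra $\bigvee_{i} \B_i$ is separable, so there is a \emph{countable} subfamily $(\B_{i_n})_{n\ge 1}$ with $\bigvee_n \B_{i_n} = \bigvee_i \B_i$. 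Set $\Xc := \bigvee_n \B_{i_n}$.

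The key step is to show $\Xc$ is a $\C$-factor, i.e.\ that the action of $T_1,\ldots,T_d$ restricted to $\Xc$ gives a system in $\C$. First, $\Xc$ is genuinely a factor: each $\B_{i_n}$ is $T_j$-invariant for every $j$, hence so is the $\sigma$-algebra they generate, so $T_1,\ldots,T_d$ act on $(\Xc, \mu|_{\Xc})$. To identify this action as a countable joining of the systems $\mathbf{Y}_n := (X,\B_{i_n},\mu|_{\B_{i_n}}, T_1,\ldots,T_d)$, consider the map $\Phi : X \to \prod_n X$ sending $x$ to the constant sequence $(x,x,\ldots)$, and push $\mu$ forward. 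The image measure $\nu := \Phi_*\mu$ on $\prod_n X$, read through the product of the quotient maps $X \to X/\B_{i_n}$, is a joining of the $\mathbf{Y}_n$ (each coordinate marginal is the correct factor system, and invariance under the product transformation $T_j \times T_j \times \cdots$ follows from invariance of $\mu$ under $T_j$). The $\sigma$-algebra this joining generates on $X$, pulled back, is exactly $\bigvee_n \B_{i_n} = \Xc$. Hence the system carried by $\Xc$ is isomorphic to a countable joining of systems in $\C$, and by the stability hypothesis it lies in $\C$. Therefore $\Xc$ is a $\C$-factor, and by construction it contains every $\C$-factor of $\X$, so it is the largest one.

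The main obstacle I anticipate is the separability reduction and the precise bookkeeping of what ``countable joining'' means here: one must make sure the diagonal measure $\nu$ really is a joining in the sense under which $\C$ is assumed stable (matching marginals, commuting diagram of factor maps, joint invariance under each $\widetilde{T_j} = T_j \times \cdots \times T_j$), and that the $\sigma$-algebra generated by the joining recovers $\Xc$ rather than something smaller. These are routine in the standard Borel setting but deserve care; the nonemptiness of the family of $\C$-factors (equivalently, that the trivial system belongs to $\C$) should be noted explicitly, as it is implicit in ``stable under countable joinings'' once one allows joinings of an arbitrary at-most-countable family, including the one-element family or, more robustly, by remarking that the class is assumed stable under isomorphism and a trivial system is a factor of any joining one writes down.
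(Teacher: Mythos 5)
Your proof is correct and follows essentially the same route as the paper: use separability of the measure algebra of a standard Borel space to reduce the (a priori uncountable) family of $\C$-factors to a countable one whose join is all of $\Xc$, then invoke stability under countable joinings to conclude that this join is itself a $\C$-factor. The only difference is one of bookkeeping (the paper picks a countable dense family of \emph{sets} and a $\C$-factor containing each, while you pick a countable subfamily of \emph{factors}), and you additionally spell out the diagonal-joining argument showing that the join of countably many factors is carried by a countable joining of the corresponding systems, a point the paper leaves implicit.
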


\begin{proof}
We just set
$$ \Xc := \{A\in \A\ :\ \mbox{$A$ belongs to some $\C$-factor of }\X\}\;, $$
and we claim that it is a $\C$-factor. Since $(X,\A,\mu)$ is a standard Borel space, the $\sigma$-algebra $\A$ equipped with the metric
$d(A,B):= \mu(A\Delta B)$ is separable (where we naturally identify subsets $A$ and $A'$ of $X$ when $\mu(A\Delta A')=0$). Therefore there exists a countable family $(A_i)_{i\in I}$ dense in $\Xc$, and for each $i$ there is some $\C$-factor $\F_i$ containing $A_i$. Since the class $\C$ is stable under taking countable joinings, $\F:=\bigvee_{i\in I}\F_i$ is itself a $\C$-factor. By density, we have $\Xc \subset\F$ but, since $\Xc$ contains every $\C$-factors, we have $\Xc=\F$.
\end{proof}

If $\C$ is stable under taking countable joinings, we can thus repeat Definition~\ref{def:sated} in this more general setting:
\begin{definition}
\label{def:sated-bis}
 The system $\X$ is said to be \emph{$\C$-sated} if any joining of $\X$ with a $\C$-system is relatively independent over the largest $\C$-factor $\Xc$ of $\X$.
\end{definition}

\begin{prop}
\label{prop:always sated}
 If the class $\C$ is stable under taking countable joinings and under taking factors, then any system $\X$ is $\C$-sated.
\end{prop}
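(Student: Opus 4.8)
The plan is to unwind Definition~\ref{def:sated-bis} and argue by contradiction, concentrating the entire difficulty in the construction of one concrete $\C$-factor of $\X$. Let $\lambda$ be a joining of $\X$ with a $\C$-system $\mathbf{Y}$. To prove that $\lambda$ is relatively independent over $\Xc$, it suffices to show that every bounded measurable $f$ on $X$ with $\espc[\X]{f}{\Xc}=0$ satisfies $\espc[\lambda]{f}{\mathbf{Y}}=0$. I would suppose, for a contradiction, that $g:=\espc[\lambda]{f}{\mathbf{Y}}\neq 0$ in $L^2(\mathbf{Y})$.

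The key object is the factor of $\X$ \emph{seen through the joining}. Let $P\colon L^2(\X)\to L^2(\mathbf{Y})$, $Pf=\espc[\lambda]{f}{\mathbf{Y}}$, be the Markov operator of $\lambda$, with adjoint $P^\ast h=\espc[\lambda]{h}{\X}$; both intertwine the respective Koopman operators because $\lambda$ is invariant. I define $\mathcal{Z}$ to be the sub-$\sigma$-algebra of $\A$ generated by $\{P^\ast h:\ h\in L^\infty(\mathbf{Y})\}$. The intertwining relations make $\{P^\ast h\}$ stable under each $T_i$, so $\mathcal{Z}$ is a genuine factor of $\X$. Moreover $f$ is not orthogonal to $L^2(\mathcal{Z})$, since $\int f\,\overline{P^\ast g}\,d\mu=\langle Pf,g\rangle=\|g\|^2>0$, whence $\espc[\X]{f}{\mathcal{Z}}\neq 0$. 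If $\mathcal{Z}$ turns out to be a $\C$-factor, then $\mathcal{Z}\subseteq\Xc$, and the tower property gives $\espc[\X]{f}{\mathcal{Z}}=\espc[\X]{\espc[\X]{f}{\Xc}}{\mathcal{Z}}=0$, contradicting the previous line. So the whole proof reduces to showing that $\mathcal{Z}$ is a $\C$-factor.

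For that last step I would use both stability hypotheses through a law-of-large-numbers realization. Disintegrate $\lambda$ over the first coordinate as $\lambda=\int_X(\delta_x\otimes\lambda_x)\,d\mu(x)$ and build on $X\times Y^\NN$ the measure $\Omega:=\int_X\bigl(\delta_x\otimes\lambda_x^{\otimes\NN}\bigr)\,d\mu(x)$, which makes infinitely many copies of $Y$ conditionally i.i.d.\ given $x$. Equivariance of the disintegration shows that the $Y^\NN$-marginal $\nu$ of $\Omega$ is invariant under the diagonal actions $\widetilde{S_i}:=T_i^{Y}\otimes T_i^{Y}\otimes\cdots$ and has all one-dimensional marginals equal to $\mu_Y$; hence $(Y^\NN,\nu,\widetilde{S_1},\ldots,\widetilde{S_d})$ is a countable self-joining of $\mathbf{Y}$ and therefore lies in $\C$. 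By the strong law of large numbers, for each $h\in L^\infty(\mathbf{Y})$ the averages $\frac1n\sum_{j=1}^n h(y_j)$ converge $\Omega$-almost surely to $\int h\,d\lambda_x=(P^\ast h)(x)$; hence every generator $P^\ast h$ of $\mathcal{Z}$ is, in the system $(X\times Y^\NN,\Omega)$, measurable with respect to the $Y$-coordinates. Thus $\mathcal{Z}$ identifies with an invariant sub-$\sigma$-algebra of the $\C$-system $(Y^\NN,\nu)$, that is, a factor of it, and stability under factors yields $\mathcal{Z}\in\C$, completing the contradiction.

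The only genuinely delicate point, and the one I expect to be the main obstacle, is this realization of $\mathcal{Z}$: the conditional-expectation (Markov) image $P^\ast L^\infty(\mathbf{Y})$ is a priori only a ``soft'' factor, and turning it into an honest factor of a bona fide $\C$-system is exactly what forces the passage to infinitely many copies---a finite self-joining would merely approximate $P^\ast h$ rather than produce it as an almost-sure limit---so stability under \emph{countable} joinings and stability under factors are both used essentially, and precisely here. The remaining verifications (existence and equivariance of the disintegration on the standard Borel space, the intertwining relations for $P$ and $P^\ast$, invariance of $\mathcal{Z}$, and the usual reductions modulo $\mu$-null sets) are routine and I would relegate them to short remarks.
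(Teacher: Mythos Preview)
Your proof is correct and follows essentially the same route as the paper's: both build the relatively-independent joining $\Omega=\lambda_\infty$ of countably many copies of $\lambda$ over $\X$ and show by averaging that each $P^\ast h=\espc[\lambda]{h}{\X}$ coincides $\Omega$-almost everywhere with a function of the $Y^\NN$-coordinates, so the factor of $\X$ it generates is isomorphic to a factor of a countable self-joining of $\mathbf{Y}$ and hence lies in $\C$. The only differences are cosmetic---you invoke the strong law of large numbers for the conditionally i.i.d.\ sequence $(y_j)$ where the paper uses the mean ergodic theorem for the shift on $Y^\NN$ together with a relative Kolmogorov $0$--$1$ law, and you wrap the argument in a contradiction around a single factor $\mathcal{Z}$ whereas the paper isolates the construction as Lemma~\ref{lemma:fundamental} and then argues directly.
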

 The proof is based on a fundamental lemma, published simultaneously in two papers~\cite{GlasnerThouvenotWeiss2000,LemanczykParreauThouvenot2000}, stating that if two systems $\X$ and $\mathbf{Y}$ are not disjoint, then $\X$ possesses a non-trivial common factor with a joining of countably many copies of $\mathbf{Y}$. We slightly rephrase this lemma in order to make it more convenient for our purposes:

\begin{lemma}
\label{lemma:fundamental}
 Let $\lambda$ be a joining of two systems $\X=(X,\A,\mu,(T_j))$ and $\mathbf{Y}=(Y,\B, \nu,(S_j))$, and let $g$ be a bounded measurable function defined in $\mathbf{Y}$. Then there exists a factor sub-$\sigma$-algebra $\F$ in $\X$ such that the action of $T_1,\ldots,T_d$ on $\F$ is isomorphic to a factor of some joining of countably many copies of $\mathbf{Y}$, and satisfying 
\begin{equation}
 \label{eq:fundamental lemma}
\espc[\lambda]{g(y)}{\X} = \espc[\lambda]{g(y)}{\F\otimes\{\emptyset,Y\}}.
\end{equation}
\end{lemma}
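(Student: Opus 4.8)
\textbf{Proof proposal for Lemma~\ref{lemma:fundamental}.}

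The plan is to deduce this restatement from the original Glasner--Thouvenot--Weiss / Lemańczyk--Parreau--Thouvenot lemma by an exhaustion argument, building the factor $\F$ as a countable join of factors coming from successive applications of the ``common factor'' statement. First I would fix the joining $\lambda$ of $\X$ and $\mathbf{Y}$ and the bounded function $g$ on $\mathbf{Y}$, and consider the function $\phi_0 := \espc[\lambda]{g(y)}{\X}$, which lives on $\X$. The goal~\eqref{eq:fundamental lemma} is exactly the statement that there is a factor $\F\subset\A$, isomorphic to a factor of a countable self-joining of $\mathbf{Y}$, such that $\phi_0$ is already $\F\otimes\{\emptyset,Y\}$-measurable (when viewed on $X\times Y$, i.e. $\espc[\lambda]{g(y)}{\X}$ is $\F$-measurable), since conditioning $g(y)$ on the larger $\sigma$-algebra $\A$ versus on $\F$ then agrees. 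So what must be shown is: the $\sigma$-algebra generated in $\X$ by $\phi_0$ (together with its images under the $T_j$ and their inverses) is contained in a factor of $\X$ that embeds as a factor of a countable joining of copies of $\mathbf{Y}$.

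The key step is an iteration. Suppose $\F_n$ is a factor of $\X$ that embeds as a factor of a countable joining of copies of $\mathbf{Y}$, and suppose $\phi_0$ is not yet $\F_n$-measurable; then $\X$ and $\mathbf{Y}$ are ``not disjoint relative to $\F_n$'' in a suitable sense — more concretely, the function $\phi_0 - \espc[\lambda]{\phi_0}{\F_n\otimes\{\emptyset,Y\}}$ is a nonzero $\X$-measurable function that is correlated with $g(y)$ under $\lambda$. Applying the fundamental lemma (in its original form, relativized over $\F_n$, or applied to a fresh joining obtained by disintegrating over $\F_n$) produces a further factor $\G_{n}$ of $\X$, itself a factor of a countable joining of copies of $\mathbf{Y}$, that captures a nontrivial portion of this correlation. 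Set $\F_{n+1} := \F_n \vee \G_n$; since the class of (factors of) countable joinings of copies of $\mathbf{Y}$ is stable under taking further countable joinings and under factors, $\F_{n+1}$ is again of the required type. One then runs this for all $n$ and sets $\F := \bigvee_n \F_n$, which is still a factor of a countable joining of copies of $\mathbf{Y}$ because only countably many copies are used in total.

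To see that this $\F$ works, I would argue that $\psi := \phi_0 - \espc[\lambda]{\phi_0}{\F\otimes\{\emptyset,Y\}}$ must vanish: $\psi$ is $\X$-measurable, orthogonal to $L^2(\F)$, and — if it were nonzero — it would still be correlated under $\lambda$ with some bounded function on $\mathbf{Y}$ (because $\phi_0 = \espc[\lambda]{g(y)}{\X}$ means $\langle \phi_0, g(y)\rangle_\lambda = \|\phi_0\|^2 > \|\espc[\lambda]{\phi_0}{\F}\|^2$ whenever $\psi \neq 0$), and then a further application of the fundamental lemma would contradict the maximality built into the exhaustion (the standard trick: at each stage one removes at least a fixed fraction of the remaining defect, or one takes a supremum over achievable ``captured mass'' and shows it is attained in the limit). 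Hence $\phi_0$ is $\F$-measurable, and~\eqref{eq:fundamental lemma} follows since $\espc[\lambda]{g(y)}{\X} = \phi_0 = \espc[\lambda]{\phi_0}{\F\otimes\{\emptyset,Y\}}$ and the tower property gives $\espc[\lambda]{g(y)}{\F\otimes\{\emptyset,Y\}} = \espc[\lambda]{\phi_0}{\F\otimes\{\emptyset,Y\}}$ as well.

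The main obstacle I anticipate is the relativization step: the original fundamental lemma is stated for plain disjointness, and to iterate I need its conditional version over an already-constructed factor $\F_n$ — equivalently, I need to encode ``disjointness over $\F_n$'' as genuine disjointness of suitable systems (for instance, passing to the relatively independent joining over $\F_n$, or working fiberwise via disintegration) so that the black-box lemma applies and still returns a factor of a countable joining of copies of $\mathbf{Y}$ rather than of something larger. Making precise that the new factor $\G_n$ genuinely strictly enlarges the captured part of $\phi_0$, and that countably many steps suffice (rather than a transfinite recursion), is the delicate bookkeeping; a clean way is to choose at step $n$ a factor capturing at least half of the current defect $\|\phi_0 - \espc[\lambda]{\phi_0}{\F_n}\|^2$, whence the defect tends to $0$ and $\phi_0$ becomes $\F$-measurable in the limit.
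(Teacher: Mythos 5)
There is a genuine gap, and it sits exactly where you locate the ``delicate bookkeeping'': your exhaustion scheme needs a \emph{relative} version of the Glasner--Thouvenot--Weiss / Lema\'nczyk--Parreau--Thouvenot lemma over the already-constructed factor $\F_n$, and no such version is available as a black box. The non-relativized statement only produces \emph{some} non-trivial common factor of $\X$ with a countable joining of copies of $\mathbf{Y}$; nothing prevents that factor from being contained in $\F_n$ already, so you have no guarantee that $\G_n$ strictly increases the captured mass $\bigl\|\espc[\lambda]{\phi_0}{\F_n\otimes\{\emptyset,Y\}}\bigr\|^2$, let alone removes a fixed fraction of the remaining defect. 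Proving the relativized (or quantitative) variant you invoke is essentially equivalent to proving the lemma itself, so the proposal is circular at its core step.

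The paper avoids iteration entirely and produces $\F$ in one stroke: form the relatively independent joining $\lambda_\infty$ of countably many copies of $\lambda$ over their common factor $\X$, a measure on $X\times Y^{\NN}$ invariant under the shift $\sigma$ acting on the $y$-coordinates. Since $\lambda_\infty$ is a product measure on each $x$-fiber, a relative Kolmogorov $0$--$1$ law identifies $\I^{\sigma}$ with the $\sigma$-algebra of the $x$-coordinate, and the mean ergodic theorem applied to $g(y_1)$ gives $\frac1N\sum_{n=1}^N g(y_n)\to\espc[\lambda]{g(y)}{\X}$ in $L^2(\lambda_\infty)$. Hence $\espc[\lambda]{g(y)}{\X}$ is, modulo $\lambda_\infty$, measurable with respect to the $(y_n)$-coordinates, so the factor of $\X$ it generates is isomorphic to a factor of a countable joining of copies of $\mathbf{Y}$, and \eqref{eq:fundamental lemma} holds by construction. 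If you want to salvage your plan, you would have to prove the relative disjointness lemma over $\F_n$, and the natural proof of that is again the $\lambda_\infty$ construction; the direct route is both unavoidable and shorter.
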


\begin{proof}
We consider a countable family of copies of the dynamical system defined by the joining $\lambda$, and consider their relatively independent joining $\lambda_\infty$ over their common factor $\X$. Then $\lambda_\infty$ is a probability measure on the space $X\times Y^{\NN}$, which is easily seen to be invariant under the shift transformation on each $x$-fiber, $\sigma:(x,y_0,y_1,y_2,\ldots)\mapsto(x,y_1,y_2,y_3,\ldots)$. Moreover, $\lambda_\infty$ conditioned on each such fiber is a product measure. A relative version of Kolmogorov 0-1 law (see \textit{e.g.} \cite{LemanczykParreauThouvenot2000}, Lemma 9) gives that, modulo $\lambda_\infty$, the $\sigma$-algebra $\I^{\sigma}$ of shift-invariant events coincides with  the $\sigma$-algebra $\A\otimes\left\{\emptyset,Y^{\NN}\right\}$ generated by the $x$ coordinate. Consider now a bounded measurable function $g$ on $Y$, and set $g_\infty(x,(y_n)):=g(y_1)$ for $x\in X$ and $(y_n)\in Y^{\NN}$. Applying the ergodic theorem in the dynamical system $(X\times Y^{\NN},\lambda_\infty,\sigma)$ to the function $g_\infty$, we obtain 
$$ \dfrac{1}{N}\sum_{n=1}^{N} g(y_n) \converge{N}{\infty}{L^2(\lambda_\infty)}
\espc[\lambda_\infty]{g_\infty}{\I^\sigma} = \espc[\lambda_\infty]{g(y_1)}{\X},$$
and by definition of $\lambda_\infty$ the latter is equal to $\espc[\lambda]{g(y)}{\X}$. 
Hence, $\espc[\lambda]{g(y)}{\X}$ coincides modulo $\lambda_\infty$ with a function which is measurable with respect to $(y_n)_{n\in\NN}$. It follows that the factor $\F$ of $\X$ generated by $\espc[\lambda]{g(y)}{\X}$ is isomorphic to a factor of the joining of countably many copies of $\mathbf{Y}$ obtained by considering the $(y_n)$-coordinates in $\lambda_\infty$. Finally, with this definition of $\F$, we obviously have~\eqref{eq:fundamental lemma}. 
\end{proof}

\begin{proof}[Proof of Proposition~\ref{prop:always sated}]
Let $\X$ be any dynamical system, and $\lambda$ be a joining of $\X$ with a $\C$-system $\mathbf{Y}$. For a given bounded measurable function $g$ defined in $\mathbf{Y}$, let $\F$ be the factor sub-$\sigma$-algebra given by Lemma~\ref{lemma:fundamental}. By stability of $\C$ under taking countable joinings and factors, $\F$ is a $\C$-factor of $\X$, and $\F$ is therefore contained in the largest $\C$-factor $\Xc$. Equation~\eqref{eq:fundamental lemma} then gives
$$
\espc[\lambda]{g(y)}{\X} = \espc[\lambda]{g(y)}{\Xc\otimes\{\emptyset,Y\}},
$$
%
and this equation means that in the joining $\lambda$, $\X$ and $\mathbf{Y}$ are relatively independent over $\Xc$. This proves that $\X$ is $\C$-sated.
\end{proof}

The class $\C$ of dynamical systems which is used in Section~\ref{sec:d=2}, and its generalization in Section~\ref{sec:general d}, are easily proved to be stable under taking countable joinings, but unfortunately they are not stable under taking factors (see Annex~B). 

This is why it is necessary in general to pass to extensions to get $\C$-sated systems. The remaining of the section is devoted to the proof of the following theorem.

\begin{theo}
\label{theo:sated extension}
 Let $\C$ be a class of dynamical systems which is stable under taking countable joinings. Then any system admits a $\C$-sated extension.
\end{theo}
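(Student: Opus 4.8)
The plan is to trade $\C$-satedness for an equality between two canonical factors, and then to force that equality by a countable tower of relatively independent joinings, with Proposition~\ref{prop:always sated} as the engine. First I would pass to the \emph{factor-stable hull} $\Cb$ of $\C$: the class of all systems isomorphic to a factor of a countable joining of systems in $\C$. One checks routinely that $\Cb$ is stable under isomorphisms, under countable joinings (a countable joining of factors lifts to a factor of a countable joining of the larger systems), and under taking factors; hence by Proposition~\ref{prop:always sated} \emph{every} system is $\Cb$-sated. Every system $\X$ then carries, besides its largest $\C$-factor $\Xc$, a largest $\Cb$-factor $\X_{\Cb}$ with $\Xc\subset\X_{\Cb}$, and — since $\C$ is stable under countable joinings — the $\Cb$-system $\X_{\Cb}$ is itself a factor of a single $\C$-system.

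The first real step is a \emph{reduction}: $\X$ is $\C$-sated if and only if $\Xc=\X_{\Cb}$. For ``$\Leftarrow$'' (the direction I use), a joining $\lambda$ of $\X$ with a $\C$-system $\mathbf{Y}$ is in particular a joining with a $\Cb$-system, so $\Cb$-satedness gives $\espc[\lambda]{f}{\mathbf{Y}}=\EE_\lambda[\espc[\X]{f}{\X_{\Cb}}\mid\mathbf{Y}]=\EE_\lambda[\espc[\X]{f}{\Xc}\mid\mathbf{Y}]$, which is precisely~\eqref{eq:C-sated}; for ``$\Rightarrow$'' one applies $\C$-satedness to the relatively independent joining of $\X$ with a $\C$-system dominating $\X_{\Cb}$, over $\X_{\Cb}$, and concludes $\espc[\X]{f}{\X_{\Cb}}=\espc[\X]{f}{\Xc}$ for all $f$. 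The second, and more delicate, step is a \emph{stability lemma}: for any extension $\pi\colon\X'\to\X$, passing to $\X'$ creates no new $\Cb$-structure over the base — precisely, if $\psi$ is bounded and $\X'_{\Cb}$-measurable then $\espc[\X']{\psi}{\pi^{-1}(\A)}$ is measurable with respect to $\pi^{-1}(\X_{\Cb})$. Here I would exploit $\Cb$-satedness of the \emph{base}: the pair of factor maps $\X'\to\X$ and $\X'\to\X'_{\Cb}$ defines a joining $\kappa$ of $\X$ with the $\Cb$-system $\X'_{\Cb}$, and $\Cb$-satedness of $\X$ is exactly the assertion that the orthogonal complement in $L^2(\X)$ of the $\X_{\Cb}$-measurable functions is orthogonal, in $L^2(\kappa)$, to the $\X'_{\Cb}$-measurable functions; reading this orthogonality from the $\X'_{\Cb}$ side gives the lemma.

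With these in hand I would build the tower. Put $\X_0:=\X$; given $\X_n$, pick (as in the first paragraph) a $\C$-system $\mathbf{Z}_n$ factoring onto $(\X_n)_{\Cb}$ and set $\X_{n+1}:=\X_n\otimes_{(\X_n)_{\Cb}}\mathbf{Z}_n$, the relatively independent joining over the common factor $(\X_n)_{\Cb}$. Each $\X_{n+1}$ extends $\X_n$; its $\mathbf{Z}_n$-coordinate is a $\C$-factor, and the defining property of a relatively independent joining places the pullback of $(\X_n)_{\Cb}$ inside $\sigma(\mathbf{Z}_n)$, hence inside $(\X_{n+1})_{\C}$. Let $\X_\infty:=\varprojlim_n\X_n$, a standard Borel system extending $\X$ whose $\sigma$-algebra is generated by the pullbacks of the $\A_n$. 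By the reduction it suffices to show $(\X_\infty)_{\Cb}\subset(\X_\infty)_{\C}$: given $\phi$ bounded and $(\X_\infty)_{\Cb}$-measurable, set $\phi_n:=\espc[\X_\infty]{\phi}{\pi_{\infty,n}^{-1}(\A_n)}$; the stability lemma applied to $\X_\infty\to\X_n$ shows $\phi_n$ is measurable with respect to the pullback of $(\X_n)_{\Cb}$, which by the previous sentence lies inside the pullback of $(\X_{n+1})_{\C}$, hence inside $(\X_\infty)_{\C}$; since $\phi_n\to\phi$ in $L^2(\X_\infty)$ by the martingale convergence theorem, $\phi$ is $(\X_\infty)_{\C}$-measurable. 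Thus $\X_\infty$ is a $\C$-sated extension of $\X$.

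The step I expect to be the main obstacle is the stability lemma. The naive worry is that curing the satedness defect of $\X_n$ by adjoining $\mathbf{Z}_n$ might open a fresh defect in $\X_{n+1}$, so that no finite — or even countable — iteration ever closes. The stability lemma is precisely what defeats this: it guarantees that any $\Cb$-structure newly created at stage $n+1$ is transverse to $\X_n$, which is what lets the martingale approximation in the inverse limit land inside the $\C$-factor one stage at a time. Proving it — that no extension can enlarge the $\Cb$-factor over its base — is the one place that genuinely calls on Proposition~\ref{prop:always sated}; the bookkeeping with pullbacks of factors, the existence of relatively independent joinings, and the construction of countable inverse limits in the standard Borel category are all routine.
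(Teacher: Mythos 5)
Your proof is correct, but it takes a genuinely different route from the paper's. Both arguments share the same skeleton up to a point: introduce the factor-stable hull $\Cb$, invoke Proposition~\ref{prop:always sated} to get that every system is $\Cb$-sated, and reduce $\C$-satedness to the equality $\Xc=\X_{\Cb}$ (your reduction is exactly Proposition~\ref{prop:characterization}). Where you diverge is in the construction: the paper stops after a \emph{single} step, taking $\mathbf{Z}=\X\otimes_{\X_{\Cb}}\mathbf{Y}$ for one $\C$-extension $\mathbf{Y}$ of $\X_{\Cb}$, and shows directly that $\mathbf{Z}_{\Cb}=\mathbf{Y}$. The key point there is stronger than your stability lemma: given any joining $\lambda$ of $\mathbf{Z}$ with a $\C$-system $\mathbf{W}$, the system $\mathbf{Y}\vee\mathbf{W}$ is itself a $\C$-system (stability under joinings), so $\Cb$-satedness of $\X$ forces $\X$ and $\mathbf{W}$ to be relatively independent over $\mathbf{Y}$ inside $\lambda$ --- and since $\mathbf{Z}=\X\vee\mathbf{Y}$, this controls all of $\mathbf{Z}$ at once, whence $\mathbf{Z}_{\Cb}\subset\mathbf{Y}$. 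Your stability lemma only controls the projection of a $\Cb$-measurable function onto the base, which is why you genuinely need the tower and the martingale argument in the inverse limit; as you note, one step does not obviously close the defect with your tools. What each approach buys: the paper's argument is shorter and exhibits a minimal sated extension in one stroke, but it leans on the specific interplay between $\mathbf{Y}$ being a $\C$-system and $\C$ being join-stable; your tower-plus-inverse-limit scheme is the more robust template (it is essentially the mechanism Austin uses in~\cite{Austin2009}, and it adapts, by interleaving, to producing extensions sated for countably many classes simultaneously). All the individual steps you flag as routine --- the join-stability of $\Cb$, the existence of the relatively independent joinings, the inverse limit in the standard Borel category, and the symmetric reading of relative independence needed in your stability lemma --- do check out.
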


For $\C$ satisfying the hypothesis of the above theorem, we start by introducing the class $\Cb$ consisting of dynamical systems which are factors of $\C$-systems. Obviously $\Cb$ is stable by taking factors, and we can also check that $\Cb$ is stable under taking countable joinings. Indeed, let $\overline{\mathbf{Z}}$ be a joining of a countable family $(\Xb_i)_{i\in I}$ of $\Cb$-systems. For each $i$, let $\X_i$ be a $\C$-extension of $\Xb_i$, and define $\mathbf{Y}_i$ as the relatively independent joining of $\X_i$ and $\overline{\mathbf{Z}}$ over their common factor $\Xb_i$. Then, consider the relatively independent joining $\mathbf{Z}$ of the $\mathbf{Y}_i$'s over their common factor $\overline{\mathbf{Z}}$. In $\mathbf{Z}$, each factor $\Xb_i$ of $\overline{\mathbf{Z}}$ is identified with a factor of $\X_i$, hence $\overline{\mathbf{Z}}$ itself, which is generated by all the $\Xb_i$'s, is contained in the $\sigma$-algebra generated by the $\X_i$'s. $\overline{\mathbf{Z}}$ is thus a factor of the joining of the $\X_i$'s defined by $\mathbf{Z}$, and since $\C$ is stable under taking countable joinings, this joining is a $\C$-system.

In any system $\X=(X,\A,\mu,(T_j))$, there exist therefore a largest $\C$-factor $\Xc$, and a largest $\Cb$-factor $\X_{\Cb}$. Since any $\C$-system is obviously a $\Cb$-system, $\Xc\subset\X_{\Cb}$. 

\begin{prop}
\label{prop:characterization}
 $\X$ is $\C$-sated if and only if $\Xc=\X_{\Cb}$.
\end{prop}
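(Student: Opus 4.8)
The plan is to prove the two implications separately, after noting that the inclusion $\Xc\subseteq\X_{\Cb}$ is automatic: every $\C$-system is a $\Cb$-system, so any $\C$-factor of $\X$ is also a $\Cb$-factor, hence contained in the largest one. Thus the proposition amounts to the equivalence ``$\X$ is $\C$-sated'' $\Longleftrightarrow$ ``$\X_{\Cb}\subseteq\Xc$''.

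The \emph{if} direction is immediate from what has already been set up. Since $\Cb$ was shown to be stable under countable joinings and under factors, Proposition~\ref{prop:always sated} applied to the class $\Cb$ shows that \emph{every} system, in particular $\X$, is $\Cb$-sated. Now let $\lambda$ be any joining of $\X$ with a $\C$-system $\mathbf{Y}$. Then $\mathbf{Y}$ is in particular a $\Cb$-system, so $\Cb$-satedness of $\X$ gives that $\X$ and $\mathbf{Y}$ are relatively independent over $\X_{\Cb}$ in $\lambda$; using the hypothesis $\X_{\Cb}=\Xc$, they are relatively independent over $\Xc$, which is precisely the assertion that $\X$ is $\C$-sated.

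For the \emph{only if} direction, assume $\X$ is $\C$-sated; I would show that every bounded $\X_{\Cb}$-measurable function $f$ is actually $\Xc$-measurable, which yields $\X_{\Cb}\subseteq\Xc$. The key is a well-chosen joining. Since $\X_{\Cb}$ is a $\Cb$-system, it is by definition a factor of some $\C$-system $\mathbf{Y}$; fixing such a $\mathbf{Y}$ with its factor map onto $\X_{\Cb}$, and recalling that $\X_{\Cb}$ is also a factor of $\X$, I form the relatively independent joining $\lambda$ of $\X$ and $\mathbf{Y}$ over their common factor $\X_{\Cb}$. This $\lambda$ is a joining of $\X$ with the $\C$-system $\mathbf{Y}$, and by construction the two copies of $\X_{\Cb}$ coincide $\lambda$-almost surely; hence the $\X_{\Cb}$-measurable function $f$ equals, $\lambda$-a.s., a function of the $\mathbf{Y}$-coordinate, so that $\espc[\lambda]{f}{\mathbf{Y}}=f$. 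Feeding this into the defining identity~\eqref{eq:C-sated} of $\C$-satedness gives $f=\EE_\lambda\bigl[\,\espc[\X]{f}{\Xc}\,\big|\,\mathbf{Y}\,\bigr]$. Then I would pair with $\overline f$ and integrate: using that $f$ is $\mathbf{Y}$-measurable to absorb $\overline f$ into the inner conditional expectation, and then that both $f$ and $\espc[\X]{f}{\Xc}$ depend only on the $\X$-coordinate (on which $\lambda$ has marginal $\mu$), this collapses to $\|f\|_{L^2(\mu)}^2=\int_X f\,\overline{\espc[\X]{f}{\Xc}}\,d\mu=\|\espc[\X]{f}{\Xc}\|_{L^2(\mu)}^2$. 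Since $\espc[\X]{f}{\Xc}$ is the orthogonal projection of $f$ onto $L^2(\Xc)$, equality of norms forces $f=\espc[\X]{f}{\Xc}$, i.e. $f$ is $\Xc$-measurable. As $f$ ranges over bounded $\X_{\Cb}$-measurable functions this gives $\X_{\Cb}\subseteq\Xc$, hence equality.

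The step I expect to need the most care is the identification of $\sigma$-algebras inside the relatively independent joining in the \emph{only if} direction: one must verify that gluing $\X$ and $\mathbf{Y}$ along the common factor $\X_{\Cb}$ really does force every $\X_{\Cb}$-measurable function on $\X$ to agree $\lambda$-a.s.\ with a $\mathbf{Y}$-measurable function, and then keep straight on which side of the joining each conditional expectation lives so that the manipulations close up. Once that is pinned down, what remains is a routine Hilbert-space projection computation, and the \emph{if} direction is a direct appeal to Proposition~\ref{prop:always sated}.
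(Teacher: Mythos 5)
Your proof is correct and follows essentially the same route as the paper: the \emph{if} direction via Proposition~\ref{prop:always sated} applied to $\Cb$, and the \emph{only if} direction via the relatively independent joining of $\X$ with a $\C$-extension $\mathbf{Y}$ of $\X_{\Cb}$ over $\X_{\Cb}$. Your Hilbert-space norm computation simply fills in the step the paper leaves implicit in the phrase ``this is only possible if $\X_{\Cb}\subset\Xc$'', and it does so correctly.
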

\begin{proof}
 The \emph{if} part is a direct corollary of Proposition~\ref{prop:always sated} applied to the class $\Cb$. Conversely, let us assume that $\X$ is $\C$-sated. Let $\mathbf{Y}$ be a $\C$-extension of $\X_{\Cb}$, and consider the relatively independent joining of $\X$ and $\mathbf{Y}$ over their common factor $\X_{\Cb}$: Since $\X$ is $\C$-sated, this joining is relatively independent over $\Xc$. But this is only possible if $\X_{\Cb}\subset\Xc$. 
\end{proof}

\begin{proof}[Proof of Theorem~\ref{theo:sated extension}]
We use the same construction as above: Given a dynamical system $\X$, we consider its largest $\Cb$-factor $\X_{\Cb}$, a $\C$-extension $\mathbf{Y}$ of $\X_{\Cb}$, and the relatively independent joining of $\X$ and $\mathbf{Y}$ over their common factor $\X_{\Cb}$. Let us denote by $\mathbf{Z}$ the latter system: $\mathbf{Z}$ is the extension of $\X$ which will be proved to be $\C$-sated. For this, by Proposition~\ref{prop:characterization} it is enough to establish that the largest $\Cb$-factor of $\mathbf{Z}$ is $\mathbf{Y}$: Since $\mathbf{Y}$ is a $\C$-system, this will give $\mathbf{Z}_\C=\mathbf{Z}_{\Cb}$.

\begin{center}
\begin{picture}(0,0)%
\includegraphics{diagram.pstex}%
\end{picture}%
\setlength{\unitlength}{4144sp}%
\begingroup\makeatletter\ifx\SetFigFont\undefined%
\gdef\SetFigFont#1#2#3#4#5{%
  \reset@font\fontsize{#1}{#2pt}%
  \fontfamily{#3}\fontseries{#4}\fontshape{#5}%
  \selectfont}%
\fi\endgroup%
\begin{picture}(1305,3162)(-1067,1097)
\put(-899,4088){\makebox(0,0)[lb]{\smash{{\SetFigFont{12}{14.4}{\rmdefault}{\mddefault}{\updefault}{\color[rgb]{0,0,0}$\mathbf{Z}=\mathbf{X}\otimes_{\X_{\Cb}}\mathbf{Y}$}%
}}}}
\put(-499,1162){\makebox(0,0)[lb]{\smash{{\SetFigFont{12}{14.4}{\rmdefault}{\mddefault}{\updefault}{\color[rgb]{0,0,0}$\Xc$}%
}}}}
\put(-1052,3107){\makebox(0,0)[lb]{\smash{{\SetFigFont{12}{14.4}{\rmdefault}{\mddefault}{\updefault}{\color[rgb]{0,0,0}$\mathbf{X}$}%
}}}}
\put(181,3101){\makebox(0,0)[lb]{\smash{{\SetFigFont{12}{14.4}{\rmdefault}{\mddefault}{\updefault}{\color[rgb]{0,0,0}$\mathbf{Y}$}%
}}}}
\put(-453,2089){\makebox(0,0)[lb]{\smash{{\SetFigFont{12}{14.4}{\rmdefault}{\mddefault}{\updefault}{\color[rgb]{0,0,0}$\mathbf{X}_{\Cb}$}%
}}}}
\end{picture}%

\end{center}

Let us consider a joining $\lambda$ of $\mathbf{Z}$ with a $\C$-system $\mathbf{W}$. Since the joining of $\mathbf{Y}$
and $\mathbf{W}$ induced by $\lambda$ is still a $\C$-system, Proposition~\ref{prop:always sated} ensures that, inside $\lambda$, $\X$ and $(\mathbf{W}\vee\mathbf{Y})$ are relatively independent over $\X_{\Cb}$. Hence $\X$ and $\mathbf{W}$ are relatively independent over $\mathbf{Y}$, and finally $\mathbf{Z}$ and $\mathbf{W}$ are relatively independent over $\mathbf{Y}$ (because $\mathbf{Z}$ is generated by $\X$ and $\mathbf{Y}$). We thus have proved that any joining of $\mathbf{Z}$ with a $\C$-system is relatively independent over $\mathbf{Y}$. Taking in particular the relatively independent joining of $\mathbf{Z}$ with a $\C$-extension of $\mathbf{Z}_{\Cb}$, we see that this is only possible if $\mathbf{Z}_{\Cb}\subset\mathbf{Y}$. Since the converse inclusion obviously holds, this concludes the proof.
\end{proof}

\subsection*{Annex A. Proof of Van der Corput Lemma}

Here is a proof of Lemma~\ref{lemma:Van der Corput}. First, observe that, since the sequence $(u_n)$ is bounded, for any $H$ we have
$$ \left\| \dfrac{1}{N}\sum_{n=1}^N u_n \right\| =  \left\| \dfrac{1}{N}\sum_{n=1}^N \dfrac{1}{H}\sum_{h=1}^H u_{n+h} \right\| + O(H/N). $$
Using the classical inequality $(a+b)^2\le 2(a^2+b^2)$, then the triangular inequality and finally the Cauchy-Schwartz inequality in the form $(1/N \sum_1^N a_n)^2\le 1/N\sum_1^Na_n^2$, we get
\begin{align*}
 \left\| \dfrac{1}{N}\sum_{n=1}^N u_n \right\|^2
	& \le 2 \left\| \dfrac{1}{N}\sum_{n=1}^N \dfrac{1}{H}\sum_{h=1}^H u_{n+h} \right\|^2 + O(H^2/N^2) \\
	& \le 2 \left(\dfrac{1}{N}\sum_{n=1}^N \left\| \dfrac{1}{H}\sum_{h=1}^H u_{n+h} \right\|\right)^2 + O(H^2/N^2) \\
	& \le  \dfrac{2}{N}\sum_{n=1}^N \left\| \dfrac{1}{H}\sum_{h=1}^H u_{n+h} \right\|^2 + O(H^2/N^2) \\
\end{align*}
We now have to estimate
$$ \dfrac{1}{N}\sum_{n=1}^N \left\| \dfrac{1}{H}\sum_{h=1}^H u_{n+h} \right\|^2 = 
	\dfrac{1}{N}\sum_{n=1}^N \dfrac{1}{H}\sum_{h=1}^H \dfrac{1}{H}\sum_{h'=1}^H <u_{n+h} , u_{n+h'}>. $$
We split the RHS into three pieces $P_{h=h'}$, $P_{h<h'}$, and $P_{h>h'}$, corresponding respectively to the terms where $h=h'$, $h<h'$, and $h>h'$. 
The first piece is simply controlled by choosing $H$ large enough:
$$ P_{h=h'} = \dfrac{1}{N}\sum_{n=1}^N \dfrac{1}{H^2}\sum_{h=1}^H <u_{n+h} , u_{n+h}> = O(1/H). $$
The second and third pieces are treated with the same computation, we only detail here the case $(h<h')$:
\begin{align*} 
P_{h<h'} &= \dfrac{1}{H}\sum_{h=1}^H \dfrac{1}{H}\sum_{h'=h+1}^H \dfrac{1}{N}\sum_{n=1}^N <u_{n+h} , u_{n+h'}> \\
	 &= \dfrac{1}{H}\sum_{h=1}^H \dfrac{1}{H}\sum_{h'=1}^{H-h} \dfrac{1}{N}\sum_{n=1}^N <u_{n} , u_{n+h'}> + O(H/N)\\
	 &= \dfrac{1}{H}\sum_{h=1}^{H-1} \dfrac{1}{H}\sum_{h'=1}^h \dfrac{1}{N}\sum_{n=1}^N <u_{n} , u_{n+h'}> + O(H/N)\\
\end{align*}
Fixing $H$ large enough, the hypothesis then implies that $P_{h<h'}$ can be made arbitrarily close to zero when $N\to\infty$, which achieves the proof.

\subsection*{Annex B. A factor of a $\C$-system is not always a $\C$-system}

Here is an example showing that the class $\C$ defined in Section~\ref{sec:d=2} is not stable under taking factors. For each $\alpha\in\TT:=\RR/\ZZ$, let us denote by $R_\alpha$ the translation on $\TT$: $x\mapsto x+\alpha\mod 1$, and by $\mu$ the Haar measure on $\TT$. For some fixed irrational $\alpha$, we consider the system $\X=(\TT\times\TT,\mu\otimes\mu,T_1,T_2)$ where $T_1:=R_\alpha\otimes R_{2\alpha}$, and $T_2:=R_{2\alpha}\otimes R_{2\alpha}$. Denoting by $x$ (respectively $y$) the first (respectively second) coordinate on $\TT\times\TT$, we observe that any function of $2x-y\mod1$ is invariant by $T_1$, hence is measurable with respect to the $\C$-factor $\Xc$. Observe also that on the $\sigma$-algebra generated by $y$, $T_1$ and $T_2$ define the same action, hence any function of $y$ is also measurable with respect to the $\C$-factor $\Xc$. It follows that the factor of $\X$ generated by $2x\mod1=(2x-y)+y\mod1$ is contained in $\Xc$, hence is a factor of a $\C$-system. However, the action of $(T_1,T_2)$ restricted to this factor is isomorphic to the action of $(R_{2\alpha},R_{4\alpha})$ on $\TT$. The latter is certainly not a $\C$-system, since both $\I^{R_{2\alpha}}$ and $\I^{R_{4\alpha}R_{2\alpha}^{-1}}$ are trivial.

\bibliography{met.bib}

\end{document}